\newtheorem{anyprop}{Anyprop}[section]
\newtheorem{theorem}[anyprop]{Theorem}
\theoremstyle{definition}
\theoremstyle{remark}
\numberwithin{equation}{section}
\begin{document}
\title[DATHEMATICS]
{DATHEMATICS: A META-ISOMORPHIC VERSION OF `STANDARD' MATHEMATICS BASED ON PROPER CLASSES}

\author[Danny Arlen de Jes\'us G\'omez-Ram\'irez]{Danny Arlen de Jes\'us G\'omez-Ram\'irez}
\address{Vienna University of Technology, Institute of Discrete Mathematics and Geometry,
wiedner Hauptstrasse 8-10, 1040, Vienna, Austria.}
\email{daj.gomezramirez@gmail.com}

\begin{abstract}
We show that the (typical) quantitative considerations about proper (as too big) and small classes are just tangential facts regarding the consistency of Zermelo-Fraenkel Set Theory with Choice. Effectively, we will construct a first-order logic theory D-ZFC (Dual theory of ZFC) strictly based on (a particular sub-collection of) proper classes with a corresponding special membership relation, such that ZFC and D-ZFC are meta-isomorphic frameworks (together with a more general dualization theorem). More specifically, for any standard formal definition, axiom and theorem that can be described and deduced in ZFC, there exists a corresponding `dual' version in D-ZFC and vice versa. Finally, we prove the meta-fact that (classic) mathematics (i.e. theories grounded on ZFC) and dathematics (i.e. dual theories grounded on D-ZFC) are meta-isomorphic. This shows that proper classes are as suitable (primitive notions) as sets for building a foundational framework for mathematics.


\end{abstract}
\maketitle
\noindent Mathematical Subject Classification (2010): 03B10, 03E99

\smallskip

\noindent Keywords: proper classes, NBG Set Theory, equiconsistency, meta-isomorphism.

\section*{Introduction}

At the beginning of the twentieth century there was a particular interest among mathematicians and logicians in finding a general, coherent and consistent formal framework for mathematics. One of the main reasons for this was the discovery of paradoxes in Cantor's Naive Set Theory and related systems, e.g., Russell's, Cantor's, Burati-Forti's, Richard's, Berry's and Grelling's paradoxes \cite{mendelsonlogic}, \cite{curry}, \cite{tait}, \cite{church}, \cite{french} and \cite{martin}. 
In particular, Russell's paradox offered one of the strongest motivations for developing new and more restricted set-theoretical frameworks. Specifically, the seminal works of E. Zermelo \cite{zermelo}; A. Fraenkel \cite{fraenkel}; J. von Newmann  \cite{vonnewmann}; P. Bernays \cite{bernays}, \cite{bernays2}; R. Robinson \cite{robinson}; and K. Goedel \cite{goedel1}, \cite{goedel2}, \cite{goedel3}; allow for the construction of the most accepted and well-known logical formal frameworks of Zermelo-Fraenkel Set Theory with Choice (ZFC) \cite{jech}, and more generally Von Newmann-Bernays-Goedel Set Theory (NBG) \cite[Ch. 4]{mendelsonlogic}. 

Now, in the context of NBG set theory the essential starting point was the intuitive idea that a kind of new entity called a `proper class' should be formed from the general  collection of all sets, because this special collection was `too big'. So, the general framework of NBG set theory is based on the primitive notion of class and the primitive relation of membership among classes. In addition, the notion of set is captured by restricting the classes to those who belong to at least another class. So, in this way one can guarantee with a suitable axiomatization that such classes remain small enough in order to prevent contradictory statements like Russell's paradox, and to fulfill the main axioms of ZFC set theory required for constructing the most fundamental mathematical theories e.g. analysis, (differential and algebraic) geometry, (abstract) algebra and number theory. 

In addition, an implicit working principle in NBG set theory is that small classes (or `sets') are more suitable objects to start and work with. On the other hand, proper classes are just too big and formally `too dangerous' in order to be able to ground any consistent and enough general mathematical theory. 

In this paper, we will mainly show that these classic quantitative considerations about proper and small classes are just tangential facts regarding the consistency of ZFC set theory. Effectively, we will construct a logic theory D-ZFC (Dual theory of ZFC set theory) strictly based on (a particular sub-collection of) proper classes with a corresponding special membership relation, such that ZFC and D-ZFC are meta-isomorphic frameworks. More specifically, for any standard formal definition, axiom and theorem that can be described and deduced in ZFC set theory, there exists a corresponding `dual' version in D-ZFC and vice versa. In particular ZFC set theory is consistent if and only if D-ZFC is consistent.

\section{Dual Notions and Axioms of Zermelo-Fraenkel Set Theory with Choice within NGB Set Theory}
\label{sect1}
In this section we will follow the treatment of E. Mendelson on the construction of the whole framework for classes and sets developed in NBF set theory \cite[Ch. 4]{mendelsonlogic}. 

Von Newmann-Bernays-G\"odel Set Theory is a very special framework in the sense that it allows the existence of complementary classes, which can be seen as `dual' classes regarding the meta-class of all classes. Specifically, we will use the formal symmetry lying in the Axiom of the Existence of the Complement Class, which asserts that for any class $X$, there exists a (unique) \emph{dual} class $X^+$ satisfying

\[ (\forall a)(a\in X \leftrightarrow a\notin X^+),\]

where $a$ varies over sets \cite[Ch. 4, B4]{mendelsonlogic}.


We will define dual notions of the main structural concepts of NBG Set Theory based on the former axiom.

Let us start with the dual notion of the membership relation $\in$, which we denote by $\varepsilon$. 
This \emph{dembership relation} is defined by the following axiom:\footnote{In most of the cases the name of the dual notions will be given by replacing (resp. adding to) the first letter of the original name with the letter `d', coming from `dual'. For example, the dual of the membership relation is called `dembership relation'.}

\[ (\forall A,B)(A\varepsilon B\leftrightarrow A^+\in B^+).\]

In this case, we say that $A$ is a \emph{dember} (\emph{delement}) of $B$.

For the dual notion of set, we analyze the corresponding dual formula:
\[M_d(X):\Leftrightarrow (\exists Y)(X \varepsilon Y)\Leftrightarrow (\exists Y)(X^+ \in Y^+)\]

\[\Leftrightarrow (\exists Z)(X^+\in Z).\]
So what it means is that $X^+$ is a \emph{sed} (dual set), if and only if, its complement $X^+$ is a set, since $Y$ varies over all classes, if and only if, $Y^+$ varies over all classes.

Now, let us prove that there is a `dual' theory of NBG set theory based on a special sub-collection of proper classes playing the dual role that sets play in NBG:


\subsection{Dual Notion of Equality}

The notion of equality for classes and its dual are exactly the same:

\[X=_dY :\Leftrightarrow (\forall Z)(Z\varepsilon X \leftrightarrow Z\varepsilon Y)\]
\[ \Leftrightarrow (\forall Z^+)(Z^+\in X^+ \leftrightarrow Z^+\in Y^+)\]
\[ \Leftrightarrow (\forall W)(W\in X^+ \leftrightarrow W\in Y^+):\Leftrightarrow X^+=Y^+\Leftrightarrow X=Y\]

\subsection{Dual Inclusion}

 The dual notion of inclusion, namely, \emph{dinclusion} is defined as usual:

\[X\sqsubseteq Y :\Leftrightarrow (\forall Z)(Z\varepsilon X \rightarrow Z\varepsilon Y).\]

We express this by saying that $X$ is a \emph{subsed} of $Y$.




\subsection{Dual Proper Classes}

 The dual notion of proper class is called \emph{d-proper class} and is given by 
\[\neg M_d(W) \Leftrightarrow (\forall Y)(\neg(W\varepsilon Y))\Leftrightarrow\]
\[(\forall Y) \neg(W^+ \in Y^+)\Leftrightarrow (\forall Z)(\neg(W^+\in Z)),\]

where $Z=Y^+$ varies over all classes. So, $W$ is a d-proper class if and only if $W^+$ is a proper class.

Informally, seds have very similar properties as sets, when replacing $\in$ by $\varepsilon$. 

In addition, since one of the central notions of NBG set theory is the concept of set, we want to understand its behavior within the framework of the $\varepsilon$ relation. So, we will focus our attention on the dual versions of the further axioms regarding sets.

\subsection{Dual Axiom T}

 The dual version of the Axiom T, namely, the \emph{Axiom $T^+$} coincides with the corresponding Axiom T due to the following reasons:

\[X=Y \Leftrightarrow X^+=Y^+ \Rightarrow (\forall Z)(X^+\in Z \leftrightarrow Y^+\in Z)\]

\[\Leftrightarrow (\forall W)(X^+\in W^+\leftrightarrow Y^+\in W^+)\]

\[\Leftrightarrow (\forall W)(X\varepsilon W \leftrightarrow Y\varepsilon W).\]

The last chain of equivalences hold due to the fact that $W$ and $Z$ vary over all classes, if and only if, $W^+$ and $Z^+$ so too do.

Besides, it is clear that

\[(\forall A,B)(A=B \leftrightarrow A^+=B^+).\]

In conclusion, the Axiom $T^+$ states 

\[X=Y \Rightarrow (\forall W)(X\varepsilon W \leftrightarrow Y\varepsilon W).\]


\subsection{Dual Predicative Well-formed Formulas}

 We denote sed variables (i.e., symbols which vary only over seds) by lower-case letters and classes by upper-case letters. So a \emph{dual predicative well-formed} (dwf) formula is just a w. f. formula $\Phi$, where all the bound variables are sed variables.

\subsection{Dual Pairing Axiom}

The dual version of the Pairing Axiom, namely, \emph{Axiom $P^+$} is the following:

\[(\forall x)(\forall y)(\exists z)(\forall u)(u\varepsilon z \leftrightarrow u = x \vee u = y).\]







Now, it is equivalent to the sentence:

\[(\forall x)(\forall y)(\exists z)(\forall u)(u^+\in z^+ \leftrightarrow u^+ = x^+ \vee u^+ = y^+).\]

Besides, $x,y,z$ and $u$ vary over seds if and only if $x^+,y^+,z^+$ and $u^+$ vary over sets. So, the last expression is equivalent to

\[(\forall x^+)(\forall y^+)(\exists z^+)(\forall u^+)(u^+\in z^+ \leftrightarrow u^+ = x^+ \vee u^+ = y^+),\]

where all the variables appearing here are set variables. So, if we know that all symbols $\Xi^+$ vary over sets, then we could eliminate the symbols $(-)^+$ and obtain, in fact, just the classic pairing axiom of NBG. So, the Axiom $P^+$ is just stating that $z^+=\{ x^+,y^+\}$, and we will denote this by $z=\Lbag x,y\Rbag$. In other words, the Axiom $P^+$ states that for any seds $x$ and $y$, there exists a (uniquely determined) sed $z$ having as \emph{denements} exactly $x$ and $y$.\footnote{In this section we show explicitly the essential constructions and (in some sense similar) arguments due basically to achieve an axiomatic completeness in our presentation. However, in the next section we will prove a more general dualization result that requires only minimal technical requirements, and can be applied far beyond the concrete axiomatization of NBG Set Theory.}


\subsection{Dual Null Set}

 For the \emph{Axiom $N^+$(Null Sed)}, let us first note that we can write the classic Axiom $N$ in the following equivalent form:

\[(\exists X)(\forall Y)(\neg(Y\in X)).\]

Effectively, the empty set satisfies clearly the former condition due to the fact that any proper class $Z$ also fulfills $\neg (Z\in \emptyset)$. On the other hand, 

a class $X$ satisfying that any class $Y$ does not belong to it, would fulfill, in particular, the classic condition defining the empty set. Therefore

due to the Class Existence Theorem \cite[Prop. 4.4 Ch 4]{mendelsonlogic}, both should be the same.

So, let us prove that the corresponding dual version of the former version of the Axiom $T$ also holds. 
In fact, 

\[(\exists X)(\forall Y)(\neg(Y\varepsilon X)) \Leftrightarrow\]

\[(\exists X)(\forall Y)(\neg(Y^+\in X^+)) \Leftrightarrow\]

\[(\exists X)(\forall Y^+)(\neg(Y^+\in X^+)) \Leftrightarrow\]

\[(\exists X^+)(\forall Y^+)(\neg(Y^+\in X^+)) \Leftrightarrow\]

\[(\exists X^+)(\forall Z)(\neg(Z\in X^+)) \Leftrightarrow\]

\[(\exists X)(\forall Z)(Z\notin X^+).\]

Now, the last sentence says that there exists a class whose complement is the empty set, which is true because the universal class $V$ of all sets fulfills this property.

So, the \emph{empty sed} is the universal class $V$. So, the \emph{duniversal class} containing all the delements is the empty set.

\subsection{Dual Unordered Pairs}

 We should define a unique value for $\Lbag X,Y\Rbag$, where $X$ and $Y$ are any classes. So, we do this in the natural way: 

\[ Z=\Lbag X,Y\Rbag :\Leftrightarrow Z^+=\{ X^+,Y^+\}.\]

Thus, the \emph{unordered d-pair} is defined as the null sed if one of the classes is a d-proper class, and it is defined by the Axiom $P^+$ if both classes are seds. Besides, by definition we get the equality $(\Lbag X,Y \Rbag)^+=\{ X^+,Y^+\}.$

In addition, we define the \emph{ordered d-pair} of $X$ and $Y$, $\llangle X,Y\rrangle$ as 
\[\Lbag \Lbag X \Rbag, \Lbag X,Y\Rbag\Rbag.\]
It is quite simple to prove that this notion fulfills the corresponding dual property that an ordered pair satisfies, i.e., two ordered d-pairs are equal if and only if the first and the second components coincide. Similarly, one defines ordered d-pairs with $n$ components.
Again, from this definition we can prove that $(\llangle X,Y \rrangle)^+=\langle X^+,Y^+\rangle$.

\subsection{Dual Axiom for the Existence of a Membership Relation}

 The axiom of the existence of the $\varepsilon-$relation states that
\[ (\exists X)(\forall u)(\forall v)(\llangle u,v\rrangle \varepsilon X \leftrightarrow u\varepsilon v).\]

Now, it is equivalent to 

\[ (\exists X^+)(\forall u^+)(\forall v^+)(\langle u^+,v^+\rangle \in X^+ \leftrightarrow u^+\in v^+).\]

So, this sentence shows the existence of a class whose complement is the $\in$-relation class, which is true, since the complement of the $\in$-relation fulfills the statement above. 

\subsection{Dual Existence of Intersections}

 The axiom of the existence of \emph{dintersections} of seds states that 

\[(\forall X)(\forall Y)(\exists Z)(\forall u)(u \varepsilon Z \leftrightarrow u\varepsilon X \wedge u\varepsilon Y).\]

It is equivalent to the following statement:

\[(\forall X^+)(\forall Y^+)(\exists Z^+)(\forall u^+)(u^+ \in Z^+ \leftrightarrow u^+\in X^+ \wedge u^+\in Y^+),\]

where $X^+,Y^+$ and $Z^+$ vary over classes and $u^+$ varies over sets. Now, the last statement is equivalent to the classic Axiom of the existence of the intersection class of two classes. Moreover, if we denote this new class by $Z=X\sqcap Y$, then it holds

\[X\sqcap Y=(X^+ \cap Y^+)^+.\]

Analogously, there is a notion of \emph{dunion} of classes denoted by $X \sqcup X$ satisfying 

\[X \sqcup Y = (X^+ \cup Y^+)^+.\]

\subsection{Dual Notion of Complement}

 The notion of the \emph{domplement} of a sed is given by the statement:

\[ (\forall X)(\exists Z)(\forall u)(u\varepsilon Z \leftrightarrow \neg(u\varepsilon X)),\]
which is equivalent to 

\[ (\forall X^+)(\exists Z^+)(\forall u^+)(u^+\in Z^+ \leftrightarrow u^+\notin X^+)).\]

Where again $X^+$ and $Z^+$ vary over classes and $u^+$ varies over sets. As before, the former sentence is equivalent to the axiom of the existence of the complement class. Besides, if we denote this class by $X^d$, then 

\[X^d=Z=(Z^+)^+=X^+.\]
So, both notions coincides.

Note that due to the definition of equality, all the classes defined before are uniquely determined, which justifies the introduction of the new symbols. 

\subsection{Dual Existence of Domains of Classes}

 The sentence guaranteeing the existence of $d-$domains of classes is the following:

\[(\forall X)(\exists Z)(\forall u)(u\varepsilon Z \leftrightarrow (\exists v)(\llangle u,v \rrangle \varepsilon X).\]

It is equivalent to 

\[(\forall X^+)(\exists Z^+)(\forall u^+)(u^+\in Z^+ \leftrightarrow (\exists v^+)(\langle u^+,v^+ \rangle \in X^+),\]

where $X^+$ and $Z^+$ vary over classes and $u^+$ varies over sets.

This is equivalent to the Domain Existence Axiom. If we denote this new class by $Z=\mathbb{D}^+(X)$, then

\[\mathbb{D}^+(X)=(\mathbb{D}(X^+))^+,\]

where $\mathbb{D}(-)$ denotes the complement of a class.

Now, it is easy to prove the last three dual versions of the Axioms of Class Existence \cite[\S 1 Ch. 4]{mendelsonlogic}, namely

\[ (\forall X)(\exists Z)(\forall u)(\forall v)(\llangle u,v\rrangle \varepsilon Z \leftrightarrow u\varepsilon X),\]

\[ (\forall X)(\exists Z)(\forall u)(\forall v)(\forall w)(\llangle u,v,w\rrangle \varepsilon Z \leftrightarrow \llangle u,w,v\rrangle \varepsilon X),\]

 and

\[ (\forall X)(\exists Z)(\forall u)(\forall v)(\forall w)(\llangle u,v,w\rrangle \varepsilon Z \leftrightarrow \llangle v,w,u\rrangle \varepsilon X).\]

Besides, there is a natural dual notion of difference of sets, defined as \emph{d-difference} of $X$ and $Y$, i.e., 
\[X\leftthreetimes Y:= X \sqcap Y^d.\]

\subsection{Dual Class Existence Theorem}

 The general D-Class Existence Theorem (DCET) is the following: 

Let $\Phi(w_{\alpha},\cdots,w_l,X_1,\cdots,X_m,Y_1,\cdots,Y_n)$ be a dwf formula, where the only relation symbols allowed are $=$ and $\varepsilon$, the bound (seds) variables are exactly $w_{\alpha},\cdots,w_l$, and the free variables occur among $X_1,\cdots,X_m,$ $Y_1,\cdots,Y_n$. Then 

\[\vdash (\exists Z)(\forall x_1)\cdots(\forall x_m)(\llangle x_1,\cdots, x_m\rrangle\varepsilon Z \leftrightarrow \Phi(x_1,\cdots,x_m,Y_1,\cdots,Y_n)).\]

Now, there is a predicative well-formed formula $\Phi^+$, corresponding to $\Phi$, constructed in the following natural way: $\Phi^+$ is obtained from $\Phi$ replacing the relation symbol $\varepsilon$ by $\in$ and replacing the bounded sed variables by bounded set variables with the same names.

So, it can easily be seen that the last sentence is equivalent to the following one:

\[ (\exists Z^+)(\forall x_1^+)\cdots(\forall x_m^+)(\langle x_1^+,\cdots, x_m^+\rangle\in Z^+ \leftrightarrow \Phi^+(x_1^+,\cdots,x_m^+,Y_1^+,\cdots,Y_n^+)),\]

which is a theorem due to the general Class Existence Theorem applied to the predicative wf $\Phi^+(w_{\alpha}^+,\cdots,w_l^+,X_1^+,\cdots,X_m^+,Y_1^+,\cdots,Y_n^+).$

\subsection{Dual Cartesian Product}

 The definition of dual Cartesian product of $X$ and $Y$ is the following:

\[(\forall x)(x\varepsilon X \boxtimes Y \leftrightarrow (\exists u)(\exists v)( x=\llangle u,v\rrangle \wedge u\varepsilon X \wedge v\varepsilon Y)).\]

This class exists in virtue to the DCET and it holds 

\[ X \boxtimes Y = (X^+ \times Y^+)^+.\]

Similarly, one defines Cartesian products for more than two classes. Besides, the dual notion of relation is the concept of \emph{delation}, namely, $X$ is a (binary) delation if $X \sqsubseteq \emptyset^{[2]}:=\emptyset \boxtimes \emptyset$. Based on this concept, we can directly define the dual notions concerning relations,e.g. $X$ is an irreflexive delation of $Y$, $X Irr^+ Y$:

\[Rel^+(X) \wedge (\forall y)(y\varepsilon Y \rightarrow \neg(\llangle y,y \rrangle \varepsilon X)).\]

Similarly, we can define $X Tr^+ Y$ ($X$ is a transitive delation on $Y$), $X Part^+ Y$ ($X$ \emph{partially d-orders} $Y$), $X Con^+ Y$ ($X$ is a connected delation of $Y$) and $X \text{Tot}^+ Y$ ($X$ \emph{totally d-orders} $Y$) and $X We^+ Y$ ($X$ \emph{well-d-orders} $Y$). So, they also fulfill the corresponding dual properties, e.g. $X \text{Tot}^+ Y$ if and only if $X^+ \text{Tot} Y^+$.

In general, the following dual notions exist also due to the DCET:

\subsection{Dual Notion of Power Class}

 By the DCET and the definition of equality, given a class $X$, there is a unique \emph{Dower class} of $X$, denoted by $\mathbb{P}^+(X)$ containing as delements all subseds of $X$:

\[\vdash (\forall X)(\exists_1 Z)(\forall y)(y\varepsilon Z \leftrightarrow y \sqsubseteq X).\]

\subsection{Dual Axiom $U$}

 The Axiom $U^+$ states that 

\[ (\forall x)(\exists y)(\forall u)(u\varepsilon y \leftrightarrow (\exists v)(u\varepsilon v \wedge v\varepsilon x)).\]

As usual, re-writing this statement with the classic notation we can see that it is equivalent to the corresponding Axiom $U$. Besides, we can easily prove that 

for any sed $X$ 

\[ \sqcup X=(\cup (X^+))^+.\]

\subsection{Dual Notion and Axiom of Sum Class}

Similarly, by using the DCET, one proves the existence of a dual notion of Sum class, namely, for any class $X$ there exists a class $Z=\sqcup(X)$ (\emph{the D-Sum Class}) such that 

\[ (\forall y)(y\varepsilon \sqcup X\leftrightarrow (\exists v)(y\varepsilon v \wedge v \varepsilon X)).\]



\subsection{Dual Axiom $W$} Moreover, the dual version for the Axiom $W$ (Power Set), i.e., the Axiom $W^+$ (\emph{Dower Sed}) is the following

\[(\forall x)(\exists y)(\forall u)(u\varepsilon y\leftrightarrow u \sqsubseteq x).\]

This sentence again holds because it is basically equivalent to the standard Axiom W (Power Set).

In particular, it is a formal computation to prove 
\[\vdash \emph{P}^+(\Lbag V, \Lbag V \Rbag \Rbag)=\Lbag V,\Lbag V\Rbag,\Lbag V,\Lbag V\Rbag\Rbag,\Lbag \Lbag V\Rbag\Rbag\Rbag,\]
 
where $V$ denotes the \emph{empty sed}, i.e., the universal class. 

\subsection{Dual Axiom $S$}

 It is a straightforward fact to state and to prove the Axiom $S^+$.

\subsection{Dual Axiom $R$}

 The same holds for the Axiom $R^+$ in terms of a \emph{univocal delation} $Un^+(X)$.

\subsection{Dual Axiom of Infinity}

 The dual version of the Axiom I (Axiom of Infinity) is the Axiom $I^+$ (D-Axiom of infinity):

\[ (\exists x)(V \varepsilon x \wedge (\forall y)(y\varepsilon x \rightarrow y \sqcup \Lbag y\Rbag \varepsilon x)).\]

Informally, it states that there exists a \emph{(dual-)infinite} sed, i.e., a class whose complement is an infinite set.

\subsection{Dual Axiom of Regularity}

 The dual version of the Axiom of Regularity (Axiom D) is the Axiom $D^+$:

 \[\forall X(\exists W(W\varepsilon X) \rightarrow \exists y(y\varepsilon  X \wedge \forall z(z\varepsilon y \rightarrow \neg(z\varepsilon X)))).\]

Again, it can be proved by using the corresponding form of the Axiom of Regularity.

\subsection{Dual Axiom of Choice}

 Finally, there is a dual version of the Axiom of Choice, namely, the \emph{Axiom of D-Choice}, stating that if $x$ is a sed of pairwise \emph{d-disjoint} seds, there exists a (d-choice) sed $c$ containing exactly one denement of each of the seds of $x$. It is a direct consequence of (the corresponding classic version of) the Axiom of Choice.

Equivalently, we can explicitly state the dual form of Zorn's Lemma with the former terminology:

\[ (\forall x)(\forall y)((y\text{Part}x)\wedge(\forall u)(u \sqsubseteq x\wedge y \text{Tot} u \rightarrow(\exists v)(v\varepsilon x\]

\[\wedge (\forall w)(w\varepsilon u\rightarrow w=u \vee \llangle w,v\rrangle \varepsilon y )))\rightarrow (\exists v)(v \varepsilon x \wedge (\forall w)(w \varepsilon x \rightarrow \neg(\llangle v, w \rrangle \varepsilon y)))),\]

which is equivalent to Zorn's Lemma, because all the variables vary over seds and then its dual expressions vary over sets. So, due to the dual properties of each of the delations expressed in the sentence, one basically obtains the corresponding classic form of Zorn's lemma by reading `dually' this specific D-Zorn's Lemma.


Finally, D-NBG Sed theory will be defined as the first-order (sub-) theory (of NBG) having as a logic axioms the standard 5 logic axioms of a first-order theory, as proper axioms we will consider the former dual versions of all the original axioms of NBG and as logical rules (as usual) Modus Ponens and Generalization \cite[Ch. 2 \S 3]{mendelsonlogic}.

\section{A More General Dualization Theorem}

In the last section we developed explicitly all the necessary basic (dual-)notions and facts based essentially on the existence of an unique complementary class, and due to expository reasons as well. Explicitly, The fact that ZFC Set Theory is (one of) the most accepted foundational frameworks for `modern' mathematics compels us to present the minimal explicit results, since our work has direct implications on the identification of the seminal causes of the (in)consistency of such framework.  Now, as the reader may suspect the dual semantic and syntactic core properties lying behind most of the former statements do not depend on the existence of all the axioms of NBG (resp. ZFC).

Effectively, Let $L$ be a first-order language with equality and one binary operation symbol $\swspoon$. Let $T$ be a first-order theory, including an axiom $A_c$ guaranteeing the existence of an unique object, which plays the role of a formal `complement' with respect to $\swspoon$, i.e.,

\[ (\forall a)(a\swspoon X \leftrightarrow a\nswspoon X^d).\]

Let us define a new binary relation symbol $\swfilledspoon$ by the sentence

 \[ (\forall A,B)(A\swfilledspoon B\leftrightarrow A^d\swspoon B^d).\]

Let $L^d$ be the language $L\cup\{\swfilledspoon\}\setminus\{\swspoon\}$. Let $\Phi$ be a $L-$formula. We define the $L^d-$formula $\Phi^{(d)}$ by replacing in $\Phi$ every occurrence of $\swspoon$ by $\swfilledspoon$. 

With the former terminology we can state our general Dualization Theorem.

\begin{theorem}
Let $\Gamma$ be a $L-$theory which includes the axiom $A_c$ and let $\Phi$ be a $L-$sentence. Let $\Gamma^{(d)}$ be the corresponding $L^{(d)}-$theory consisting of the duals of the elements of $\Gamma$. Then, $\Gamma\vdash_L \Phi$ if and only if $\Gamma^{(d)}\vdash_{L^{(d)}}\Phi^{(d)}$. Furthermore, if $M$ is a $L-$model of $\Gamma$, then the natural correspondence induced by the operator $(-)^{(d)}$ induces an isomorphism between $M$ and $M^{(d)}$, where $M^{(d)}$ is exactly $M$ as set and the interpretation of $\swfilledspoon$ is given by means of the original interpretation of $\swspoon$ and the complements. 
\end{theorem}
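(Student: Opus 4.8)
The plan is to treat the operator $(-)^{(d)}$ as the purely syntactic renaming of the binary relation symbol $\swspoon$ to the fresh symbol $\swfilledspoon$, to note that this renaming commutes with all connectives, quantifiers and substitutions and is self-inverse, and then to transport derivations (and models) along it. Everything rests on two elementary consequences of $A_c$: first, $(-)^d$ is total and single-valued, so ``$X^d$'' is a legitimate defined term and $L$- and $L^{(d)}$-formulas may be regarded as being over the bare signatures $\{=,\swspoon\}$ and $\{=,\swfilledspoon\}$ after eliminating the descriptions; second, $(-)^d$ is an \emph{involution}, since $(\forall a)(a\swspoon X\leftrightarrow\neg(a\swspoon X^d))$ is logically equivalent to $(\forall a)(a\swspoon X^d\leftrightarrow\neg(a\swspoon X))$, so $X$ itself satisfies the defining property of the complement of $X^d$ and uniqueness in $A_c$ forces $X=(X^d)^d$. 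Dually, $A_c^{(d)}$ asserts precisely that $\swfilledspoon$-complements exist and are unique, and $(-)^d$ is an involution on that side too; hence the hypotheses are symmetric under dualization, with $(L^{(d)})^{(d)}=L$, $(\Gamma^{(d)})^{(d)}=\Gamma$ and $(\Phi^{(d)})^{(d)}=\Phi$.

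For the provability equivalence I would argue directly on proofs. Given a derivation $\psi_1,\dots,\psi_k=\Phi$ of $\Phi$ from $\Gamma$ in $L$, apply $(-)^{(d)}$ to each line to get $\psi_1^{(d)},\dots,\psi_k^{(d)}=\Phi^{(d)}$ and verify this is a derivation in $L^{(d)}$ from $\Gamma^{(d)}$: a proper axiom $\psi_i\in\Gamma$ becomes $\psi_i^{(d)}\in\Gamma^{(d)}$ by definition; since $(-)^{(d)}$ commutes with $\rightarrow$ and with $\forall$, each application of Modus Ponens or Generalization is carried to an application of the same rule; and a logical axiom $\psi_i$ of $L$ is an instance of one of the finitely many axiom schemes, so the uniform replacement of $\swspoon$ by $\swfilledspoon$ --- which leaves $=$ untouched and commutes with substitution of terms for variables --- sends it to an instance of the same scheme, i.e.\ to a logical axiom of $L^{(d)}$. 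This yields $\Gamma\vdash_L\Phi\Rightarrow\Gamma^{(d)}\vdash_{L^{(d)}}\Phi^{(d)}$; the converse is the same implication applied in the dual setting, using the symmetry recorded above.

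For the model-theoretic assertion, let $M\models\Gamma$ and let $M^{(d)}$ be the structure with the same universe as $M$ and with $\swfilledspoon^{M^{(d)}}$ defined by $a\,\swfilledspoon^{M^{(d)}}\,b:\Leftrightarrow a^d\,\swspoon^M\,b^d$. I claim that the complement map $h\colon x\mapsto x^d$ --- the correspondence induced by $(-)^{(d)}$ --- is an isomorphism $M\to M^{(d)}$ once the two languages are identified via $\swspoon\leftrightarrow\swfilledspoon$: $h$ is a bijection because $(-)^d$ is an involution, it visibly respects $=$, and it intertwines the two relations, since $h(a)\,\swfilledspoon^{M^{(d)}}\,h(b)\iff (a^d)^d\,\swspoon^M\,(b^d)^d\iff a\,\swspoon^M\,b$. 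The usual induction on formulas (atomic case: exactly these bijectivity and intertwining facts, together with the involutivity that reduces away $(-)^d$) then gives $M\models\Psi(\bar a)\iff M^{(d)}\models\Psi^{(d)}(h(\bar a))$ for every formula $\Psi$ and every tuple $\bar a$ from $M$; specialising to the members of $\Gamma$ shows in addition that $M^{(d)}\models\Gamma^{(d)}$, which ties the semantic half of the statement to the syntactic half.

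I expect the only genuinely delicate point to be the logical-axiom case of the syntactic step: one must pin down the exact list of schemes in force (Mendelson's propositional and quantifier schemes, the equality schemes, and --- if $(-)^d$ is kept as a genuine function symbol rather than eliminated as a description --- its congruence axiom) and check, scheme by scheme, that replacing one binary relation symbol by a fresh one of the same arity respects each scheme together with its side conditions on free and bound occurrences of variables. The remaining ingredients --- that $(-)^{(d)}$ commutes with connectives, quantifiers and substitution, and that $(-)^d$ is an involution --- are immediate from the definitions and $A_c$.
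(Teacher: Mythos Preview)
Your proposal is correct and follows essentially the same route as the paper: transport a derivation line by line under the syntactic renaming $(-)^{(d)}$, use that the operation is an involution for the converse, and identify the model isomorphism with the complement map. Your write-up is considerably more careful than the paper's (which is little more than a sketch), in particular by isolating the involutivity of $(-)^d$ from the uniqueness clause of $A_c$, by spelling out the semantic transfer $M\models\Psi(\bar a)\Leftrightarrow M^{(d)}\models\Psi^{(d)}(h(\bar a))$, and by flagging the logical-axiom schemes as the only place where something actually needs to be checked.
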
 
\begin{proof}
Let $H_1,\cdots,H_n$ be a $L-$proof of $\Gamma\vdash_L\Phi$ i.e., $H_m=\Phi$ and for any $i=1,\cdots,m-1$, $H_i$ is either an axiom of ZCF, an element of $\Gamma$, or it is a wff that can be deduced by a valid inference rule from the former $H_j-s$
It is straightforward to see that $H_1,\cdots,H_m$ is a $L-$proof of $\Gamma\vdash_L \Phi$, if and only if $H_1^{(d)},\cdots,H_m^{(d)}$ is a $L^{(d)}-$proof of $\Gamma^{(d)}\vdash_{L^{(d)}}\Phi^{(d)}$. On the other hand, one can directly verify that the operator $(-)^{(d)}$ is its own inverse, and therefore it is an isomorphism.

\end{proof}

\section{Dathematics}


Let us call `standard' (or set-theoretic) Mathematics for all formal mathematical theories which are grounded in ZFC set theory, for instance, Real and Complex Analysis, Geometry, Algebra, Number theory, Topology and Category Theory. So, we will name \emph{Dathematics} for the family of all dual versions of the (former) modern theories, where all the subsequent concepts and theorems describing properties among them are expressed and grounded by D-ZFC. 

Here, D-ZFC is, strictly speaking, a first-order logic dual sub-theory of NBG, i.e., in the same way that NBG is a conservative extension of ZFC, so too is D-NBG a conservative extension of (the corresponding theory) D-ZFC.\footnote{The dual NBG first-order logic theory is just NBG itself considered with the former dual axioms (which are in fact, theorems of the theory) and the former conventions about quantification over classes and seds, using explicitly only two binary relation symbols, i.e., $=$ and $\varepsilon$.} In particular, the fundamental objects of Dathematics are (a specific sub-collection of) proper classes, i.e. seds.

Furthermore, the empty-sed is the universal class $V$ and the universal d-proper class $V^+$ is the empty set. So, in Dathematics, the quantitative properties in classical sense are reversed. Besides, it is a natural meta-fact that (classic) mathematics and dathematics are (syntactically) meta-isomorphic, i.e., for any concept, theory and conjecture in (standard) mathematics there exists a symmetric d-concept, d-theory and d-conjecture in dathematics with equivalent formal properties, and vice versa. For instance, we can prove the following syntactic meta-correspondence:


\begin{theorem}
Let $C$ be a conjecture in ZFC (seen as a sub-theory of NBG) given by a wff $\phi$. Then there exists a corresponding dual conjecture $C^+$ in D-ZFC given by the dwf $\phi^+$, such that $C$ is provable in ZFC if and only if $C^+$ is provable in D-ZFC, i.e., $ZFC\vdash \phi$ if and only if $D-ZFC \vdash \phi^+$. Moreover, if $P$ is a proof of $C$ in ZFC, then the natural dual version of $P$ in D-ZFC, namely $P^+$, is a proof of $C^+$ and vice versa. In other words, (standard) Mathematics and Dathematics are meta-isomorphic theories. In particular, they are equiconsistent.
\end{theorem}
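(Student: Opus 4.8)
The plan is to obtain this statement as a direct application of the general Dualization Theorem of Section~2, instantiated at $\Gamma = $ NBG, together with the classical conservativity of NBG over ZFC and its dual. First I would fix the data of the general theorem: let $L$ be the language of NBG, let $\swspoon$ be $\in$ and let $(-)^d$ be the complement operator $(-)^+$, so that the required axiom $A_c$ is precisely the Axiom of the Existence of the Complement Class (B4), which indeed belongs to NBG; then the derived relation $\swfilledspoon$ is exactly the dembership relation $\varepsilon$ introduced in Section~1. For a conjecture $C$ of ZFC given by a formula $\phi$, viewing $C$ inside NBG amounts to replacing $\phi$ by its relativization $\phi^{\mathrm{set}}$ to set variables; the associated dwf $\phi^+$ of Section~1 --- obtained by turning $\in$ into $\varepsilon$ and relativizing the quantifiers to seds --- is then literally the formula $(\phi^{\mathrm{set}})^{(d)}$ in the notation of Section~2.

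The second step is the identification NBG$^{(d)} = $ D-NBG. By definition the axioms of NBG$^{(d)}$ are the duals of the axioms of NBG, and Section~1 checks, one axiom at a time (Axiom $T^+$, $P^+$, $N^+$, the three Class-Existence clauses, $U^+$, $W^+$, $S^+$, $R^+$, $I^+$, $D^+$, and D-Choice), that every such dual is a theorem of NBG; since the operator $(-)^{(d)}$ is its own inverse (the closing remark in the proof of the general Dualization Theorem), conversely every axiom of NBG is a theorem of NBG$^{(d)}$, up to the harmless fact that $L^{(d)}$ still carries the term-definable complement operation. Hence NBG and NBG$^{(d)}$ have exactly the same theorems, and NBG$^{(d)}$ is the first-order theory the paper calls D-NBG. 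In the same way, D-NBG is a conservative extension of D-ZFC: this is the dual --- again an instance of the general Dualization Theorem --- of the fact that NBG is conservative over ZFC, D-ZFC being defined (cf.\ the footnote) as the sub-theory of D-NBG consisting of the sed-relativized dwfs.

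It remains to chain the equivalences. We have: ZFC $\vdash \phi$ iff NBG $\vdash \phi^{\mathrm{set}}$ (conservativity); iff NBG$^{(d)} \vdash (\phi^{\mathrm{set}})^{(d)}$ (general Dualization Theorem); iff D-NBG $\vdash \phi^+$ (by the identification of the previous paragraph, using $(\phi^{\mathrm{set}})^{(d)} = \phi^+$); iff D-ZFC $\vdash \phi^+$ (dual conservativity). For the claimed meta-isomorphism of proofs, take a proof $P = (H_1,\dots,H_m)$ of $C$ in ZFC, relativize it to an NBG-proof of $\phi^{\mathrm{set}}$, and apply the line-by-line dualization $H_i \mapsto H_i^{(d)}$ from the proof of the general Dualization Theorem; the result descends to a D-ZFC-proof $P^+$ of $C^+$, and since $(-)^{(d)}$ is an involution, $P \mapsto P^+$ is a bijection between the ZFC-proofs of $C$ and the D-ZFC-proofs of $C^+$. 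Equiconsistency is then the special case $\phi$ equal to, say, $(\exists x)(x \neq x)$: this $\phi$ is provable in ZFC iff ZFC is inconsistent, $\phi^+$ is provable in D-ZFC iff D-ZFC is inconsistent, and the two are equivalent by what precedes.

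The only point requiring genuine care --- everything else being either the general Dualization Theorem or the case-by-case verifications already carried out in Section~1 --- is the conservativity bookkeeping: one must confirm that the informal recipe defining $\phi^+$ in Section~1 produces exactly $(\phi^{\mathrm{set}})^{(d)}$, and that ``D-NBG is conservative over D-ZFC'' is set up to be the precise dual of ``NBG is conservative over ZFC'', so that the two outer equivalences in the chain are themselves legitimate instances of the dualization mechanism rather than independent hypotheses.
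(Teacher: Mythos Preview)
Your argument is correct, but it takes a considerably longer route than the paper. The paper's proof is a direct repetition of the line-by-line dualization from the general Dualization Theorem, applied immediately at the ZFC/D-ZFC level: given a ZFC-proof $P_1,\dots,P_m$ of $\phi$, each $P_i^+$ is either a D-ZFC axiom (because D-ZFC is \emph{defined} so that its proper axioms are exactly the duals of the ZFC axioms, as verified one by one in Section~1), or a logical axiom, or follows from earlier $P_j^+$'s by Modus Ponens or Generalization (since $(-)^+$ is a purely syntactic relabelling that preserves the shape of every inference). That already yields both the provability equivalence and the proof bijection, with no appeal to NBG or to conservativity.

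Your detour through NBG and the two conservativity steps is not wrong, but it is redundant: the portion of your argument that handles the proof bijection (``relativize a ZFC proof to NBG, dualize line by line, observe it descends to D-ZFC'') is already the paper's entire proof, and it establishes the provability equivalence on its own. The outer conservativity chain is therefore scaffolding that can be removed. Note also that your justification of ``dual conservativity'' as ``an instance of the general Dualization Theorem'' is loose: conservativity of NBG over ZFC is a meta-theorem about two theories, not an $L$-sentence derivable inside some $\Gamma$ containing $A_c$, so it is not literally in the scope of that theorem; what actually makes the transfer work is, again, the direct syntactic bijection between ZFC-proofs and D-ZFC-proofs --- which is precisely the paper's approach. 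What your route does buy is an explicit articulation of how the ZFC/D-ZFC correspondence sits inside the NBG/D-NBG one via conservativity, which is conceptually pleasant even if not needed for the proof.
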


\begin{proof}
The argument is similar to the one of the former theorem but with a small additional consideration. Effectively, let $P=\{P_1,\cdots,P_m\}$ be a proof of $\phi$ in ZFC. Then, based on the constructions done in Sect. \ref{sect1}, one can see that each $P_i^+$ is d-wff (i.e. a wff with respect to seds) and $P^+=\{P_1^+,\cdots,P_m^+\}$ is a valid proof of $\phi^+$ in the theory D-ZFC, which has the same inference rules of ZFC. The converse is straightforward. 
So, all (dual) well-formed statements that can be syntactically deduced from one theory, can be (dually) mirrowed into the other one. In conclusion, Mathematics and Dathematics are in this sense syntactically meta-isomorphic.
\end{proof}

In particular, there exists a dual theory of the classical ZFC Set theory which can also be called \emph{ZFC Sed Theory}.

\section{Conclusions}

The fact that set-theoretic Mathematics (based on ZFC (resp. NBG) Set Theory) and Dathematics are meta-isomorphic, and, in particular, one is consistent if and only if the other one is also consistent; together with the fact that the semantics for Dathematics are canonically given by `very big' objects (i.e. proper classes); show that the cause of the Russell's paradox in Naive Set Theory is not only a matter of the `size' of the corresponding foundational objects (e.g. sets), but it also lies within a deeper conceptual level in the formal framework in which sets are defined.

Effectively, we have shown here that there is a formally identical version of standard Mathematics (i.e. Dathematics) structurally based on exactly the same type of objects that turn out to be avoided in NBG because of inconsistency issues, namely, proper classes. In particular, both formal frameworks are `equi-consistent', and both also simultaneously have, from a quantitative perspective, `diametrically opposite' seminal objects, i.e., sets and proper classes.

\section*{Acknowledgements}

The author wishes to thank Pedro Zambrano, Jose Manuel G\'omez, Diana Carolina Montoya and specially to Diego Mejia for the useful suggestions during the elaboration of this manuscript. In addition, he would like to thank Jairo G\'omez for the inspiration by teaching the beauty of formal consistent thinking. Finally, he thanks J. Kieninger for all the support and kindness.


\begin{thebibliography}{10}

\bibitem{bernays}
Paul Bernays, \emph{A system of axiomatic set theory}, Studies in Logic and the
  Foundations of Mathematics \textbf{84}, Elsevier, 1--119, (1976).

\bibitem{bernays2}
\bysame, \emph{Axiomatic set theory}, Courier Corporation, (1991).

\bibitem{church}
Alonzo Church, \emph{The richard paradox}, The American Mathematical Monthly
  \textbf{41}, no.~6, 356--361, (1934).

\bibitem{curry}
Haskell~Brooks Curry, \emph{Foundations of mathematical logic}, Courier
  Corporation, (1963).

\bibitem{fraenkel}
Adolf Fraenkel, \emph{Zu den grundlagen der cantor-zermeloschen Mengenlehre},
  Mathematische Annalen \textbf{86}, no.~3, 230--237, (1922).

\bibitem{french}
James~D French, \emph{The false assumption underlying Berry's paradox}, The
  Journal of Symbolic Logic \textbf{53}, no.~04, 1220--1223, (1988).

\bibitem{goedel1}
Kurt G{\"o}del, \emph{The consistency of the axiom of choice and of the
  generalized continuum-hypothesis}, Proceedings of the National Academy of
  Sciences \textbf{24}, no.~12, 556--557, (1938).

\bibitem{goedel2}
\bysame, \emph{The consistency of the axiom of choice and of the generalized
  continuum hypothesis with the axioms of set theory}, Uspekhi Matematicheskikh
  Nauk \textbf{3}, no.~1, 96--149, (1948).

\bibitem{goedel3}
Kurt G{\"o}del and Solomon Feferman, \emph{Kurt g{\"o}del: Collected works:
  Volume ii: Publications 1939-1974}, vol.~2, Oxford University Press, (1990).

\bibitem{jech}
Thomas Jech, \emph{Set theory: The third Millenium Edition, Revised and Expanded}, Springer-Verlag Berlin Heidelberg, (2013).

\bibitem{martin}
Robert~L Martin, \emph{On grelling's paradox}, The Philosophical Review
  \textbf{77}, no.~3, 321--331, (1968).

\bibitem{mendelsonlogic}
Elliot Mendelson, \emph{Introduction to mathematical logic (fifth edition)},
  Chapman\&Hall/CRC, (2010).

\bibitem{robinson}
Raphael~M Robinson, \emph{The theory of classes a modification of von Neumann's
  system}, The Journal of symbolic logic \textbf{2}, no.~01, 29--36, (1937).

\bibitem{tait}
William~W Tait, \emph{Cantor's grundlagen and the paradoxes of set theory},
  Between Logic and Intuition: Essays in Honor of Charles Parsons (ed. G. Sher
  and R. Tieszen), 269--290, (2000).

\bibitem{vonnewmann}
John Von~Neumann, \emph{Eine Axiomatisierung der Mengenlehre.}, Journal f{\"u}r
  die reine und angewandte Mathematik \textbf{154}, 219--240, (1925).

\bibitem{zermelo}
Ernst Zermelo, \emph{Untersuchungen {\"u}ber die Grundlagen der Mengenlehre.
  i}, Mathematische Annalen \textbf{65}, no.~2, 261--281, (1908).

\end{thebibliography}
\end{document}